\def\Ddots{\mathinner{\mkern1mu\raise\p@
\vbox{\kern7\p@\hbox{.}}\mkern2mu
\raise4\p@\hbox{.}\mkern2mu\raise7\p@\hbox{.}\mkern1mu}}
\titleformat*{\subsection}{\Large\bfseries}
\titleformat*{\subsubsection}{\large\bfseries}
\titleformat*{\paragraph}{\large\bfseries}
\titleformat*{\subparagraph}{\large\bfseries}
\newtheorem{theorem}{Theorem}[section]
\newtheorem{lemma}[theorem]{Lemma}
\theoremstyle{definition}
\newtheorem{definition}[theorem]{Definition}
\newcommand{\bN}{{\mathbb N}}
\newcommand{\pf}{{\mathcal P}_f}
\newcommand{\ntos}{{}^{\hbox{$\bN$}}\!S}
\date{\vspace{-5ex}}
\begin{document}

\title{Cartesian products of two $CR$ sets}
\author{Sayan Goswami\\   \textit{ sayan92m@gmail.com}\footnote{Ramakrishna Mission Vivekananda Educational and Research Institute, Belur Math,
Howrah, West Benagal-711202, India.}}

\maketitle
\begin{abstract}

The notions of CR set is intimately related with the generalized van der Waerden's theorem. In this article, we prove the product of two CR sets is again a CR set. This  answers  \cite[Question 4.2.]{H}. We use combinatorial arguments to prove our result.

\end{abstract}
Keywords: CR-sets, product space, $IP_r^*$ sets\\
Subject class: 05D10,  22A15, 54D35

\section{Introduction}


For any nonempty set $X$, let $\mathcal{P}_f(X)$ be the set of all nonempty finite subsets of $X.$
In Arithmetic Ramsey theory, people deal with the monochromatic patterns found in any given finite coloring of the
integers or of the natural numbers $\bN$. Here, ``coloring” means disjoint partition, and a set is called ``monochromatic” if it is included in one part of the partition. A cornerstone result in this field of
research is Van der Waerden's Theorem \cite{14}, which states that for any finite coloring of the set of natural numbers (usually denoted by $\mathbb{N}$), one always finds monochromatic arithmetic progressions of arbitrary length. One can show that if a set $A$ contains arithmetic progressions of arbitrary length (A.P. rich in short), then if $A=B\cup C$ is a 2-coloring of $A$, one of $B$ and $C$ is A.P. rich\footnote{though not relevant with this article, it should be noted that the polynomial extension of this beautiful result has been recently found in \cite[Theorem 19]{sg1}}. This property is called partition regularity.

 The notions of {\it piecewise syndetic} set have been extensively studied in Ramsey theory and its connection with the Algebra in the Stone-\v{C}ech Compactification of $\bN.$ For every $A\subseteq \bN,$ and $n\in \bN$, denote $-n+A=\{m:m+n\in A\}.$
 \begin{definition}[Piecewise syndetic]
     A set $A\subseteq \bN$ is said to be piecewise syndetic if there exists a finite set $F\subset \bN$ such that for every finite subset $G$ of $\bN$, there exists $y\in \bN$ such that $G+y\subset \cup_{t\in F}(-t+A)$.
 \end{definition}
 It can be proven that piecewise syndetic sets are partition regular. For general semigroups, one can define this notion naturally. For details, readers can see \cite{HS}. In \cite{b2}, authors introduced a more general notion of large sets: combinatorially rich sets for commutative semigroups. Later in \cite{H}, authors introduced the notion of combinatorially rich sets for general semigroups and proved that these sets are partition regular. It can be easily proved that if the semigroup is commutative, then these sets are A.P. rich.

\begin{definition}[$CR$ set] \cite[Definition 2.2]{H}
Let $(S,\cdot)$ be an arbitrary semigroup. A set $A\subseteq S$ is said to be a {\it combinatorially rich} set ($CR$-set) if and only
if for each $k\in \mathbb{N}$, there exists $r\in \mathbb{N}$ such that whenever $F\in\pf(\,\ntos\,)$ with $|F|\leq k$, there exist $m\in\bN$, 
$a\in S^{m+1}$, and $t(1)<t(2)<\ldots<t(m)\leq r$ in $\bN$ such that
for each $f\in F$, $$a(1)\cdot f\big(t(1)\big)\cdot a(2)\cdot f\big(t(2)\big)\cdot a(3)\cdots a(m)\cdot f\big(t(m)\big)\cdot a(m+1)\in A.$$
Call $r=r(A,k).$
\end{definition}

Using the algebra of the Stone-\v{C}ech compactification of discrete semigroups, in \cite{HSa}, authors proved several large sets are preserved under taking cartesian products. For combinatorial proofs, we refer to the article \cite{sg}.
Inspired by these studies, in \cite[Question 4.2.]{H}, authors asked if there exist two infinite semigroups $S$ and $T$, and two CR-sets $A$ and $B$ in $S$ and $T$ respectively, such that $A\times B$ is not a CR-set in $S\times T$. Here we show that such a possibility can't occur. We prove the following theorem:

\begin{theorem}\label{m}
Let $S$ and $T$ be semigroups, let $A$ be a $CR$ set in $S$, and let $B$ be a $CR$ set in $T$. Then $A\times B$ is a $CR$ set in $S\times T$.
\end{theorem}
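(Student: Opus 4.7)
The goal is to produce, for each $k\in\bN$, an integer $r(A\times B,k)$ witnessing the $CR$-property of $A\times B$ in $S\times T$. Fix $F\in\pf(\ntost)$ with $|F|\leq k$, and for each $f\in F$ write $f^S$ and $f^T$ for its $S$- and $T$-components. We must find a single pattern $(m,t(1)<\cdots<t(m))$ together with coefficients $a\in S^{m+1}$ and $b\in T^{m+1}$, independent of $f$, such that the induced $S$-word lies in $A$ and the induced $T$-word lies in $B$ simultaneously. The core difficulty is \emph{synchronization}: applying the $CR$-property of $A$ to $\{f^S:f\in F\}$ and of $B$ to $\{f^T:f\in F\}$ independently yields two patterns that may disagree both in length and in positions, so the na\"ive attempt fails.

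The strategy is to first invoke the $CR$-property of $B$ with parameter $k$, obtaining $r_B:=r(B,k)$, a specific pattern $\sigma_B=(m_B,\,t_B(1)<\cdots<t_B(m_B)\leq r_B)$, and coefficients $b\in T^{m_B+1}$ realising the $B$-word. We then force the $A$-side to produce a pattern that is compatible with $\sigma_B$ by enlarging the family of $S$-sequences to anticipate all possible $B$-patterns. Concretely, for each $f\in F$ and each candidate $B$-pattern $\sigma$ with positions in $\{1,\dots,r_B\}$ (of which there are at most $2^{r_B}$), construct an auxiliary sequence $g_{f,\sigma}\in\ntos$ that samples $f^S$ at the positions $t_\sigma(1),\dots,t_\sigma(m_\sigma)$ and pads a fixed element $e\in S$ thereafter. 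Applying the $CR$-property of $A$ with parameter $k':=k\cdot 2^{r_B}$ to the family $\{g_{f,\sigma}\}$ produces a pattern $(m_A,\,u(1)<\cdots<u(m_A)\leq r(A,k'))$ and coefficients $a$ that work uniformly across all $(f,\sigma)$. Specialising to $\sigma=\sigma_B$ and collapsing the $f$-independent dummy terms arising from indices $u(j)>m_B$ into the trailing coefficient, one extracts an $A$-word using $f^S$ at the positions $\{t_B(u(j)):u(j)\leq m_B\}\subseteq\{t_B(1),\dots,t_B(m_B)\}$.

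The main obstacle is the residual pattern mismatch that remains after this two-step procedure: the $A$-positions obtained above form a \emph{subset} of the $B$-positions $t_B$. One cannot reduce the $B$-word to this smaller set of positions na\"ively, since doing so would require absorbing $f$-dependent terms $f^T(t_B(\ell))$ into the coefficients $b$, destroying their independence from $f$. Overcoming this is the heart of the proof: either one must arrange the auxiliary family $\{g_{f,\sigma}\}$ so that the $A$-pattern produced, when read through the sampling map for $\sigma_B$, recovers $t_B$ exactly (for instance by a padding scheme that forces $u(j)\leq m_B$ for all $j$), or one must apply $CR$ of $B$ a second time to a $T$-family parametrised by the possible $A$-patterns, closing the loop between the two coordinates. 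The delicate bookkeeping, in which all parameters remain finite and all coefficients remain $f$-free while the final pattern is common to the two coordinates, is where the combinatorial content of the argument lies.
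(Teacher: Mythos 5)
Your proposal correctly isolates the real difficulty --- synchronizing the position set $t(1)<\dots<t(m)$ across the two coordinates while keeping the coefficient tuples $f$-free --- but it does not overcome it, and the two escape routes you sketch at the end do not close the gap. The first (a padding scheme forcing $u(j)\le m_B$) cannot work as stated: the definition of a $CR$ set gives no control whatsoever over \emph{which} positions $u(1)<\dots<u(m_A)$ get selected, only the upper bound $u(m_A)\le r(A,k')$, so nothing prevents some or all of the $u(j)$ from exceeding $m_B$; and even when they do not, the positions at which $f^S$ is genuinely sampled form a possibly proper subset of $\{t_B(1),\dots,t_B(m_B)\}$, while the $B$-word you fixed in step one uses \emph{all} of $t_B(1),\dots,t_B(m_B)$, with no way to delete the surplus factors $\pi_2\circ f(t_B(\ell))$ without making $b$ depend on $f$. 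The second route (re-applying the $CR$ property of $B$ to a family parametrised by the possible $A$-patterns) reproduces the same mismatch one level up and has no visible termination. So as written the argument is a correct setup plus an honest admission that the key step is missing.

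The paper resolves the synchronization by a structurally different idea. For a $CR$ set $A$ and a family $F$ it considers the collection $\Theta(A,F)\subseteq\pf(\bN)$ of all position sets $L$ that admit \emph{some} coefficient tuple working for every $f\in F$, and proves (by a blocking/concatenation construction reminiscent of your $g_{f,\sigma}$, but used to a different end) that $\Theta(A,F)$ is an $IP^*_r$ set in $\pf(\bN)$ with $r=r(A,k)$, i.e.\ it meets every $IP_r$ set of finite unions. The decisive external input is a result of Bergelson and Robertson: for all $u,v$ there is $l=l(u,v)$ such that the intersection of an $IP^*_u$ set with an $IP^*_v$ set is $IP^*_l$. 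Applying this to $\Theta(A,G)\cap\Theta(B,H)$, where $G$ and $H$ are the coordinate projections of $F$, and intersecting with the concrete $IP_l$ set $FU(\langle\{i\}\rangle_{i=1}^l)$ produces a \emph{single} position set $L\subseteq\{1,\dots,l\}$ good for both coordinates simultaneously; the coefficient tuples $a$ and $b$ are then chosen independently and paired. To complete your approach you would need an ingredient of this kind --- a largeness property of the family of good position sets that is preserved under finite intersection with a uniform bound --- rather than an iteration of the $CR$ property itself.
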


\section{Proof of our result}

$IP$ sets play a major role in Ramsey theory due to their partition regularity, as proved in \cite{21}. $IP$ sets can be defined over any semigroup with a little modification if needed. Here we define the notions of $IP$ sets over $\pf(\bN)$.
\begin{definition}[$IP$ set over $\pf(\bN)$]
A set $A\subseteq \pf(\bN)$ is said to be an $IP$ set if there is a sequence $\langle H_n\rangle_n$ in $\pf(\bN)$ such that
\begin{enumerate}
    \item[$(1)$] for every $n\in \bN,$ we have $\max H_n<\min H_{n+1},$ and
    \item[$(2)$] the set $A=FU(\langle H_n\rangle_n)=\left\lbrace\bigcup_{n\in K}H_n:K\in \pf(\bN)\right\rbrace.$ 
\end{enumerate}
\end{definition}
A consequence of Hindman's theorem \cite{21} says that for every finite coloring of $\pf(\bN)$, there exists a monochromatic $IP$ set. In our proof, we need the finitary version of $IP$ sets, known as $IP_r$ sets, that we define below.

\begin{definition}[$IP_r$ set  over $\pf(\bN)$]
    For every $r\in \bN$, a set $B\subseteq \pf(\bN)$  is called an $IP_r$ set if there exists sequence $\langle H_n\rangle_{n=1}^r$ in $\pf(\bN)$ such that
  \begin{enumerate}
      \item[$(1)$] for every $n\in \bN,$ we have $\max H_n<\min H_{n+1},$ and 
      \item[$(2)$] the set $B=FU(\langle H_n\rangle_{n=1}^r)=\left\lbrace\bigcup_{n\in K}H_n:\emptyset \neq K\subseteq \{1,2,\ldots ,r\}\right\rbrace.$
  \end{enumerate}  
\end{definition}

 A set is called $IP^*$ (resp. $IP_r^*$) if that set intersects with every $IP$ set (resp. $IP_r$ sets). Let us recall the following lemma, which is essential for our purpose.
\begin{lemma}\label{2}
 Given $r,s\in\mathbb N$, there exists $l=l(r,s)\in\mathbb N$ such that whenever $A$ is an $IP^*_r$ set in $\mathcal P_f(\mathbb N)$ and $B$ is an $IP^*_s$ set in $\mathcal P_f(\mathbb N)$, one has that $A\cap B$ is an $IP^*_l$ set in $\mathcal P_f(\mathbb N)$. 

\end{lemma}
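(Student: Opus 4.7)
The plan is to prove the contrapositive using a finitary form of Hindman's finite-unions theorem. Suppose $A\cap B$ is not $IP_l^*$: then there exists an $IP_l$ set $C=FU(\langle H_n\rangle_{n=1}^l)$ with $C\cap(A\cap B)=\emptyset$, so every element of $C$ lies in $\pf(\bN)\setminus A$ or in $\pf(\bN)\setminus B$. This induces a $2$-coloring $\chi$ of $C$: set $\chi(x)=1$ if $x\notin A$, and $\chi(x)=2$ otherwise (in which case $x\in A\setminus B$).

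The key combinatorial input is a finitary version of Hindman's theorem: for every $m\in\bN$ there exists $l=l(m)\in\bN$ such that any $2$-coloring of $FU(\langle H_n\rangle_{n=1}^l)$ contains a monochromatic subsystem of the form $FU(\langle K_j\rangle_{j=1}^m)$, where each $K_j=\bigcup_{n\in I_j}H_n$ for pairwise disjoint blocks $I_1,\ldots,I_m\subseteq\{1,\ldots,l\}$ satisfying $\max I_j<\min I_{j+1}$. Since $\max K_j<\min K_{j+1}$ is then inherited from the same property of the $H_n$, such a subsystem is itself an $IP_m$ set. This statement follows from the infinite Hindman theorem recalled earlier by a standard compactness argument. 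Choosing $m=\max\{r,s\}$ and $l:=l(m)$ will be sufficient.

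Applying the finitary Hindman input to $\chi$ yields a monochromatic sub-$IP_m$ set $D\subseteq C$. If the constant color is $1$, then $D\subseteq\pf(\bN)\setminus A$, and restricting the generating sequence of $D$ to its first $r$ terms produces an $IP_r$ set contained in $\pf(\bN)\setminus A$, contradicting $A\in IP_r^*$. If instead the color is $2$, the symmetric truncation to $s$ terms yields an $IP_s$ set disjoint from $B$, contradicting $B\in IP_s^*$. Either alternative contradicts our hypothesis, completing the proof.

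The main obstacle is the finitary Hindman step above, specifically the requirement that the monochromatic set extracted sits inside the prescribed $IP_l$ set $C$ as a subsystem of its generators (so that truncating to $r$ or $s$ blocks still yields a genuine $IP_r$ or $IP_s$ set in $\pf(\bN)$). This strengthening is by now classical and follows from the infinite Hindman theorem by compactness, so no new combinatorics is required beyond recognizing its applicability here.
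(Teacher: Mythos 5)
Your proof is correct. Note that the paper does not actually prove Lemma \ref{2}: its ``proof'' is a bare citation of Bergelson--Robertson, Proposition 2.5, so you are supplying combinatorics that the paper outsources. Your route --- contrapositive plus a finitary finite-unions (Folkman-type) theorem extracted from Hindman's theorem by compactness --- is the standard argument for this statement and is essentially what underlies the cited proposition; what it buys here is a self-contained proof with an explicit (if non-effective) bound $l(r,s)=l(\max\{r,s\})$. The one step that deserves explicit care is the \emph{ordered} form of the finitary finite-unions theorem: the paper's definition of an $IP_m$ set requires the generators to satisfy $\max K_j<\min K_{j+1}$, so the monochromatic subsystem must consist of blocks $K_j=\bigcup_{n\in I_j}H_n$ with $\max I_j<\min I_{j+1}$, not merely pairwise disjoint $I_j$. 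This stronger form does follow from the infinite theorem (first refine a pairwise disjoint family to a block family, then apply compactness), as you indicate. The remaining steps are sound: the induced $2$-coloring of $\mathcal P_f(\{1,\ldots,l\})$ is well defined because $K\mapsto\bigcup_{n\in K}H_n$ is injective for a block sequence, color $2$ forces $x\in A\setminus B$ and hence $x\notin B$, and truncating the length-$m$ monochromatic system to its first $r$ (resp.\ $s$) blocks yields a genuine $IP_r$ (resp.\ $IP_s$) set avoiding $A$ (resp.\ $B$), contradicting the hypotheses.
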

\begin{proof}
\cite[Propostion 2.5]{b}.
\end{proof}

We also need the following lemma.
\begin{lemma}\label{1}  Let $(S,\cdot )$ be a semigroup, let $A$ be a $CR$ set, and let $F\in\mathcal P_f(\mathbb N_S)$. Let $\Theta=\Theta (A,F)=\{L\in\mathcal P_f(\mathbb N):\text { if } m=|L|\text{ and } L=\{t(1),t(2),\ldots,t(m)\}_<,$ then $(\exists\, a\in S^{m+1})(\forall  f\in F)$ $(a(1)\cdot f(t(1))\cdot a(2)\cdots a(m)\cdot f(t(m))\cdot a(m+1)\in A)\}$. If $k\in\mathbb N,|F|\leq k$, and $r=r(A,K)$, then $\Theta$ is an $IP^*_r$ set in $\mathcal P_f(\mathbb N)$.  
\end{lemma}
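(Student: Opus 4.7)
The plan is to reduce the lemma directly to the $CR$-property of $A$ by encoding each block $H_n$ of the given finite $IP$ sequence into a single symbol. Concretely, given an $IP_r$ set $FU(\langle H_n\rangle_{n=1}^r)$ that we wish to intersect with $\Theta$, we cook up a family $\tilde F$ of at most $k$ functions $\tilde f\in\ntos$ built from $F$, apply the $CR$-property of $A$ with parameter $k$ (which yields the very same $r$), and show that the resulting positions pick out a set of blocks whose union lies in $\Theta$.

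Fix any $e\in S$ and, for each $f\in F$ and each $n\in\{1,\ldots,r\}$ with $H_n=\{h_{n,1}<\cdots<h_{n,p_n}\}$, define
$$\tilde f(n)=f(h_{n,1})\cdot e\cdot f(h_{n,2})\cdot e\cdots e\cdot f(h_{n,p_n}),$$
and extend $\tilde f$ to all of $\bN$ arbitrarily. Then $\tilde F=\{\tilde f:f\in F\}$ has size $\leq k$, so the $CR$-property of $A$ produces $m'\in\bN$, $a\in S^{m'+1}$, and $s(1)<\cdots<s(m')\leq r$ such that $a(1)\cdot\tilde f(s(1))\cdot a(2)\cdots a(m')\cdot\tilde f(s(m'))\cdot a(m'+1)\in A$ for every $f\in F$. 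Set $L=\bigcup_{i=1}^{m'}H_{s(i)}$ and enumerate $L$ in increasing order as $t(1)<\cdots<t(m)$; by the block-disjointness condition $\max H_n<\min H_{n+1}$, this enumeration sweeps through $H_{s(1)}$, then $H_{s(2)}$, and so on. Now assemble $\tilde a\in S^{m+1}$ by setting $\tilde a(1)=a(1)$, $\tilde a(m+1)=a(m'+1)$, placing the transition elements $a(2),\ldots,a(m')$ at the $m'-1$ positions where the enumeration of $L$ crosses from one $H_{s(i)}$ to the next, and placing copies of $e$ at all remaining slots (those strictly interior to a single block). Expanding each $\tilde f(s(i))$ in the $CR$-product gives literally $\tilde a(1)\cdot f(t(1))\cdot\tilde a(2)\cdots\tilde a(m)\cdot f(t(m))\cdot\tilde a(m+1)\in A$, so $L\in\Theta\cap FU(\langle H_n\rangle_{n=1}^r)$, proving $\Theta$ is $IP_r^*$.

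The only genuine obstacle is that a general semigroup $S$ need not possess an identity, so we cannot simply declare the intra-block entries of $\tilde a$ to be $1$. The filler element $e$ is introduced at the coding stage precisely so that the $e$'s demanded inside $\tilde a$ are already baked into the values $\tilde f(s(i))$; this neutralizes the issue at the cost of somewhat more careful indexing. The remaining verifications --- that $|\tilde F|\leq k$, that the positions $s(i)\leq r$ translate into a nonempty $K=\{s(1),\ldots,s(m')\}\subseteq\{1,\ldots,r\}$ witnessing $L\in FU(\langle H_n\rangle_{n=1}^r)$, and that the two product expansions agree term by term --- are routine once the coding is in place.
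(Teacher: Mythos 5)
Your proposal is correct and is essentially the paper's own argument: the paper likewise fixes a filler element $d\in S$, encodes each block $H_n$ as a single value $g_f(n)=f(b(n,1))\cdot d\cdots d\cdot f(b(n,\alpha_n))$, applies the $CR$ property to $\{g_f:f\in F\}$, and then unpacks the chosen blocks into a longer tuple $c$ that places the $a(j)$'s at block boundaries and $d$'s in the interior slots. The only differences are notational.
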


\begin{proof} Let $\langle H_n\rangle^r_{n=1}$
be a sequence in $\mathcal P_f(\mathbb N)$ such that for each $n\in \{1,2,\ldots,r-1\}$, $\max H_n<\min H_{n+1}$. For $n\in\{1,2,\ldots,r\}$, let $\alpha_n=|H_n|$ and write $H_n=\{b(n,1),b(n,2),\ldots,b(n,\alpha_n)\}_<$. Pick $d\in S$ and for $f\in F$, define $g_f\in\mathbb N_S$ by 
\[
g_f(n)=f(b(n,1))\cdot d\cdot f(b(n,2))\cdot d\cdots d\cdot f(b(n,\alpha_n)) \text { if } n\in \{1,2,\cdots,r\},
\]
and $g_f(n)=d$ otherwise. Now $\{g_f:f\in F\}\in \mathcal P_f(\mathbb N_S)$ with $|\{g_f:f\in F\}|\leq k$. So pick $m\in\mathbb N$, $a\in S^{m+1}$, and $t(1)<t(2)<\cdots<t(m)\leq r$ in $\mathbb N$, such that for each $f\in F,a(1)\cdot g_f(t(1))\cdot a(2)\cdots a(m)\cdot g_f (t(m))\cdot a(m+1)\in A$.

 We claim that $\bigcup^m_{j=1}H_{t(j)}\in\Theta$. Let $p=\sum^m_{j=1}\alpha_{t(j)}$. Then $p=|\bigcup^m_{j=1}H_{t(j)}|$ and the elements of $\bigcup^m_{j=1}H_{t(j)}$ in order are 
 \begin{align*}
 s(1)<s(2)<&\dots<s(p)=\\
b(t(1),1)<b(t(1),2)<&\dots<b(t(1),\alpha_{t(1)})<\\ b(t(2),1)<b(t(2),2)<&\dots<b(t(2),\alpha_{t(2)})<  \\
&\vdots\\
b(t(m),1)<b(t(m),2)<&\dots<b(t(m),\alpha_{t(m)}).  
 \end{align*}
 
Define $c\in S^{p+1}$ by $c(i)=a(j)$ if $j\in\{1,2,\ldots,m\}$ and $s(i)=b(t(j),1)$, $c(p+1)=a(m+1)$, and $c(i)=d$ if $i\in\{1,2,\ldots,p\}$ and $s(i)\neq b(t(j),1)$ for any $j\in\{1,2,\ldots,m\}$. Then for each $f\in F$,
\begin{align*}
 &c(1)\cdot f(s(1))\cdot c(2)\cdots c(p)\cdot f(s(p))\cdot c(p+1)=\\
 &a(1)\cdot g_f(t(1))\cdot a(2)\cdots
a(m)\cdot g_f(t(m))\cdot a(m+1)\in A.
\end{align*}

Thus $\bigcup^m_{j=1}H_{t(j)}\in\Theta$ as required. 
\end{proof}

Now we are in the position to prove our main theorem.
\begin{proof}[\textbf{Proof of Theorem \ref{m}:}]

To see that $A\times B$ is a CR set, let $k\in \mathbb N$. Let $u=r(A,K)$, let $v=r(B,k)$, and let $l=l(u,v)$ be as guaranteed by Lemma \ref{1}, Let $F\in\mathcal P_f(\mathbb N_{(S\times T)})$ with $|F|\leq K$. We will show that there exist $m\in\mathbb N,c\in(S\times T)^{m+1}$, and $t(1)<t(2)<\cdots<t(m)\leq l$ such that for each $f\in F$,
 \[
c(1)\cdot f(t(1))\cdot c(2)\cdots c(m)\cdot f(t(m))\cdot c(m+1)\in A\times B.
 \]

Let $G=\{\pi_1\circ f:f\in F\}$ and let $H=\{\pi_2\circ f:f\in F\}$. Let $\Theta_1=\Theta(A,G)$ and let $\Theta_2=\Theta(B,H)$. By Lemma \ref{1}, $\Theta_1$ is an $IP^*_u$ set in $\mathcal P_f(\mathbb N)$ and $\Theta_2$ is an $IP^*_v$ set in $\mathcal P_f(\mathbb N)$ so by Lemma \ref{2}, $\Theta_1\cap\Theta_2$ is an $IP^*_l$ set in $\mathcal P_f(\mathbb N)$. Consequently $\Theta_1\cap\Theta_2\cap FU(\langle\{i\}\rangle^l_{i=1})\neq\emptyset$. Pick $L\in\Theta_1\cap\Theta_2\cap FU(\langle\{i\}\rangle^l_{i=1})$, let $m=|L|$, and write $L=\{t(1),t(2),\ldots t(m)\}_<$. Since $L\in FU(\langle\{i\}\rangle^l_{i=1})=\{1,2,\cdots,l\}$, we have that $t(m)\leq l$.

 Pick $a\in S^{m+1}$ such that for each $f\in F$,
 \[
 a(1)\cdot \pi_1\circ f(t(1))\cdot a(2)\cdots a(m)\cdot\pi_1\circ f(t(m))\cdot a(m+1)\in A
 \]
 and pick $b\in T^{m+1}$ such that for each $f\in F$,
 \[
 b(1)\cdot\pi_2\circ f(t(1))\cdot b(2)\cdots b(m)\cdot \pi_2\circ f(t(m))\cdot b(m+1)\in B.
 \]
 For $j\in\{1,2,\ldots,m+1\}$ let $c(j)=(a(j),b(j))$. Then for $f\in F$,
 \[
 c(1)\cdot f(t(1))\cdot c(2)\cdots c(m)\cdot f(t(m))\cdot c(m+1)\in A\times B
 \]
 as required.

\end{proof}

\section*{Acknowledgement} The Author is thankful to the referee for his/her comments on the previous draft of the paper which helped us to improve this draft. The author is also supported by NBHM postdoctoral fellowship with reference no: 0204/27/(27)/2023/R \& D-II/11927. 

\section*{Declaration of Interest Statement} The authors declare that they have no known competing financial interests or personal relationships that could have appeared to influence the work reported in this article.


\begin{thebibliography}{99}

\bibitem{b} V. Bergelson, and D. Robertson: Polynomial recurrence with large intersection over countable fields,  Israel Journal of Mathematics, 214 (2016), no. 1, 109-120.

\bibitem{b2} V. Bergelson, and D. Glasscock: On the interplay between additive and
multiplicative largeness and its combinatorial applications,  J. Combin. Theory
Ser. A, 172 (2020).


\bibitem{sg} S. Goswami: Cartesian product of some combinatorially rich sets, INTEGERS, (20)2020 \#A64.

\bibitem{sg1} S. Goswami, L. Luperi Baglini, and S. K. Patra: Polynomial extension of the Stronger Central Sets
Theorem, Electronic Journal of Combinatorics,  30 (4), (2023), P4.36.

\bibitem{21} N. Hindman: Finite sums from sequences within
cells of partitions of $\bN$, J. Comb. Theory (Series A) 17 (1974), 1-11.


\bibitem{H} N. Hindman, H. Hosseini, D. Strauss, and M. Tootkaboni: Combinatorially rich sets in arbitrary semigroups,  Semigroup Forum, 107 (2023), 127-143.

\bibitem{HSa} N. Hindman, and D. Strauss: 
Cartesian products of sets satisfying the Central Sets Theorem,  Topology Proc. {\bf 35} (2010), 203-223.

\bibitem{HS} N. Hindman, and D. Strauss:  Algebra in the Stone-\v Cech Compactification: 
Theory and Applications, 2nd edition, Walter de Gruyter \& Co., Berlin, 2012.

\bibitem{14} B.L. van der Waerden: Beweis einer baudetschen vermutung. Nieuw.
Arch. Wisk., 15:212--216,1927.




\end{thebibliography}
\end{document}